\newtheorem{theorem}{Theorem}[section]
\newtheorem{lemma}[theorem]{Lemma}
\newtheorem{corollary}[theorem]{Corollary}
\theoremstyle{definition}
\patchcmd{\section}{\scshape}{\large}{}{}
\patchcmd{\subsection}{\bfseries}{\normalfont}{}{}
\patchcmd{\subsubsection}{\itshape}{\normalfont}{}{}
\def\@setauthors{%
	\begingroup
	\def\thanks{\protect\thanks@warning}%
	\trivlist
	\centering\footnotesize \@topsep30\p@\relax
	\advance\@topsep by -\baselineskip
	\item\relax\normalsize 
	\author@andify\authors
	\def\\{\protect\linebreak}%
	\authors%
	\ifx\@empty\contribs
	\else
	,\penalty-3 \space \@setcontribs
	\@closetoccontribs
	\fi
	\endtrivlist
	\endgroup
}
\def\@settitle{\begin{center}%
		\baselineskip14\p@\relax
		\bfseries\large
		\@title
	\end{center}%
}
\definecolor{lime}{HTML}{A6CE39}
\DeclareRobustCommand{\orcidicon}{%
	\begin{tikzpicture}
		\draw[lime, fill=lime] (0,0) 
		circle [radius=0.16] 
		node[white] {{\fontfamily{qag}\selectfont \tiny ID}};
		\draw[white, fill=white] (-0.0625,0.095) 
		circle [radius=0.007];
	\end{tikzpicture}
	\hspace{-2mm}
}
\newcommand{\orcidHa}{\href{https://orcid.org/0000-0002-9129-8441}{\orcidicon}}
\newcommand{\orcidSon}{\href{https://orcid.org/0000-0002-9560-6392}{\orcidicon}}
\newcommand{\orcidVinh}{\href{https://orcid.org/0009-0005-9917-1436}{\orcidicon}}
\begin{document}
	\title[A note on additive commutator groups in certain algebras]{A note on additive commutator groups in certain algebras}
	
	\author[Nguyen Thi Thai Ha]{Nguyen Thi Thai Ha\textsuperscript{1,$\S$}}
	
	\author[Tran Nam Son]{Tran Nam Son\textsuperscript{2,3,\ddag,*}}
	
	\author[Pham Duy Vinh]{Pham Duy Vinh\textsuperscript{2,3,4,\dag}}
	
	\keywords{Division ring; Commutator; Twisted group algebra; Matrix algebra}
	
	\subjclass[2020]{12E15; 15B33; 16K40; 16S50; 20C07}
	
	\maketitle	
	
	\begin{center}
	{\small 
		*Corresponding author\\
		\textsuperscript{1}Campus in Ho Chi Minh City,\\ University of Transport and Communications,\\
		Ho Chi Minh City, Vietnam\\
		\textsuperscript{2}Faculty of Mathematics and Computer Science,\\ University of Science,\\ Ho Chi Minh City, Vietnam \\
		\textsuperscript{3}Vietnam National University,\\ Ho Chi Minh City, Vietnam\\
		\textsuperscript{4}Department of Mathematics,\\ Dong Nai University,\\ 9 Le Quy Don Str., Tan Hiep Ward, Bien Hoa City,\\ Dong Nai Province, Vietnam\\
		\textsuperscript{$\S$}hantt\_ph@utc.edu.vn,\\
		\orcidHa\url{http://orcid.org/0000-0002-9129-8441},\\
		\textsuperscript{\ddag}trannamson1999@gmail.com,\\ \orcidSon \url{https://orcid.org/0000-0002-9560-6392},\\
		\textsuperscript{\dag}phamduyvinh.dlu@gmail.com,\\ \orcidVinh\url{https://orcid.org/0009-0005-9917-1436}
	}
\end{center}
	
	\begin{abstract}
We study whether a unital associative algebra \( A \) over a field  admits a decomposition of the form $A = Z(A) + [A,A]$ where \( Z(A) \) is the center of \( A \) and \( [A,A] \) denotes the additive subgroup of $A$ generated by all additive commutators of $A$. Among our main considerations are the cases in which $A$ is the matrix ring over a division ring, a generalized quaternion algebra, or a semisimple finite-dimensional algebra. We also discuss some applications  that do not necessarily require the decomposition, such as the case where \( A \) is the twisted group algebra of a locally finite group over a field of characteristic zero: if all additive commutators of $A$ are central, then \( A \) must be commutative.
	\end{abstract}
	
	\section{Introduction}
	
	In any associative algebra, one way to measure how far two elements are from commuting is to consider their additive commutator, defined by $ab-ba$. When the commutator is zero, the two elements commute; otherwise, the expression captures the essence of their noncommutativity. This simple operation plays a central role in algebra; for instance, it allows us to view associative algebras as Lie algebras by treating the commutator as a Lie bracket. From a structural perspective, it is natural to begin with a familiar example: the ring $\mathrm{M}_n(F)$ of $n\times n$ matrices over a field $F$ of of characteristic zero. In this setting, any matrix $A$ in $\mathrm{M}_n(F)$ can be  written as the sum of two parts: a central matrix $\frac{\mathrm{trace}(A)}{n}\mathrm{I}_n$ and a traceless matrix $A-\frac{\mathrm{trace}(A)}{n}\mathrm{I}_n$. What makes this decomposition particularly striking is a result by Albert and Muckenhoupt \cite{Pa_Al_57}, which shows that every traceless matrix is in fact a single additive commutator. In other words, each matrix in $\mathrm{M}_n(F)$ can be expressed as the sum of a central element and an additive commutator. Such a decomposition naturally prompts a broader question: does this phenomenon extend beyond $\mathrm{M}_n(F)$? That is, given a unital associative algebra $A$ over a field, can every element of $A$ be expressed as the sum of a central element of $A$ and an additive commutator of $A$? A more modest question one might ask is whether the equality $A = Z(A) + [A,A]$ holds, where \( Z(A) \) is the center of \( A \) and \( [A,A] \) denotes the additive subgroup of $A$ generated by all additive commutators of $A$. In this way, the decomposition separates the “commutative core” of the algebra from the part that captures its internal noncommutativity.
	
The paper will open with a structural insight into matrix rings over division rings, stated precisely in Theorem~\ref{key1}. This leads naturally to several corollaries in broader contexts, including generalized quaternion algebras (Corollary~\ref{quaternion}), semisimple finite-dimensional algebras (Corollary~\ref{semisimple}), and C*-algebras (Corollary~\ref{C-algebra}). We also derive a version of the classical commutativity theorem for twisted group algebras (Corollary~\ref{twisted}). Moreover, we prove a similar additive decomposition in Theorem~\ref{key2} for images of noncommutative polynomials. Finally, in connection with this result, we show that under suitable conditions, any division ring that is finite-dimensional over its center is generated by the images of noncommutative polynomials (as an algebra) (see Theorem~\ref{generated}).

\section{The main results}

We adopt the standard notation \( \mathrm{M}_n(A) \) to refer to the algebra of \( n \times n \) matrices over a unital associative algebra \( A \) over a field. As established in \cite[Lemma 5.8]{Pa_DuHaSo_24}, if \( D \) is an algebraic division ring of characteristic zero and \( n \) is a positive integer, then the matrix algebra \( \mathrm{M}_n(D) \) admits the decomposition  
$\mathrm{M}_n(D) = Z(\mathrm{M}_n(D)) + [\mathrm{M}_n(D), \mathrm{M}_n(D)].$  It is well known that the center \( Z(\mathrm{M}_n(D)) \) consists precisely of all scalar matrices of the form \( \lambda \mathrm{I}_n \), where \( \lambda \) belongs to the center of \( D \). Recall that a division ring \( D \) is called \emph{algebraic} if every element of \( D \) satisfies a nonzero polynomial with coefficients in the center of \( D \). In the setting of positive characteristic, further insight is provided by \cite[Theorem 5]{Pa_ChuLee_79}, which focuses on division rings that are finite-dimensional over their centers, a special subclass of algebraic division rings. According to \cite[(15.8) Theorem, p.~242]{Bo_La_91}, such division rings must have dimensions that are perfect squares. This naturally suggests considering the square root of the dimension when formulating the condition in the next result.

\begin{theorem} \label{key1}
	Let \( D \) be a division ring with center \( F \), and let \( n \) be a positive integer. Then, the equality
	$\mathrm{M}_n(D) = Z(\mathrm{M}_n(D)) + [\mathrm{M}_n(D), \mathrm{M}_n(D)]$
	holds in either of the following cases:
	\begin{enumerate}[\rm (i)]
		\item \( D \) is an algebraic division ring of characteristic zero.
		\item \( D \) is finite-dimensional over \( F \), where \( \operatorname{char} (F) = p > 0 \), and \( p \) does not divide \( \sqrt{\dim_F D} \) nor $n$.
	\end{enumerate}
\end{theorem}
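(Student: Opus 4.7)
The plan is to dispatch part (i) by a direct appeal to \cite[Lemma~5.8]{Pa_DuHaSo_24}, as explicitly recalled in the preamble to the theorem, and to handle part (ii) by lifting the analogous scalar statement for $D$ itself, due to Chuang and Lee, to $\mathrm{M}_n(D)$ via elementary matrix-unit identities.

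For (ii), I would begin with \cite[Theorem~5]{Pa_ChuLee_79}: since $D$ is finite-dimensional over $F$ with $\operatorname{char}(F) = p > 0$ and $p \nmid \sqrt{\dim_F D}$, one obtains $D = F + [D,D]$. Hence each $a \in D$ admits a decomposition $a = \lambda + c$ with $\lambda \in F$ and $c \in [D,D]$.

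Next, I would take an arbitrary $A = (a_{ij}) \in \mathrm{M}_n(D)$ and split it into off-diagonal and diagonal pieces. Each off-diagonal entry is a single additive commutator via the identity $a_{ij} E_{ij} = [a_{ij} E_{ii}, E_{ij}]$ for $i \neq j$. For the diagonal, I would write $a_{ii} = \lambda_i + c_i$ with $c_i = [x_i, y_i] \in [D,D]$; then $c_i E_{ii} = [x_i E_{ii}, y_i E_{ii}]$ takes care of the $[D,D]$ contribution. The remaining term $\sum_{i=1}^n \lambda_i E_{ii}$ is $F$-valued, and here the hypothesis $p \nmid n$ allows me to form $\bar\lambda = n^{-1} \sum_i \lambda_i \in F$ and decompose
\[
\sum_{i=1}^n \lambda_i E_{ii} \;=\; \bar\lambda\, \mathrm{I}_n \;+\; \sum_{i=1}^n (\lambda_i - \bar\lambda) E_{ii}.
\]
The second summand has zero $F$-trace, and the identity $[\mu E_{i1}, E_{1i}] = \mu(E_{ii} - E_{11})$ for $\mu \in F$ and $i \geq 2$ expresses any such trace-zero $F$-diagonal matrix as an explicit sum of additive commutators. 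Assembling the pieces gives $A \in Z(\mathrm{M}_n(D)) + [\mathrm{M}_n(D), \mathrm{M}_n(D)]$, as required.

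The main obstacle is conceptual rather than computational: the argument hinges on reducing the problem from $\mathrm{M}_n(D)$ to $D$, and the two arithmetic hypotheses in (ii) are precisely what make this reduction go through. The condition $p \nmid \sqrt{\dim_F D}$ is exactly what is needed to invoke Chuang--Lee on the scalar ring, while $p \nmid n$ is what permits the averaging step that pulls $\bar\lambda$ into $F$; failure of either condition would genuinely obstruct the corresponding step of the lifting.
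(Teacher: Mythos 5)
Your argument is correct, and part (i) is handled exactly as in the paper (both simply cite \cite[Lemma~5.8]{Pa_DuHaSo_24}). For part (ii), the conceptual skeleton is the same --- invoke Chuang--Lee's Theorem~5 to obtain $D = F + [D,D]$, then lift to $\mathrm{M}_n(D)$ using $p \nmid n$ for the averaging step --- but you replace the paper's remaining citations with explicit matrix-unit computations. The paper goes from $D = F + [D,D]$ to $\mathrm{M}_n(D) = \mathrm{M}_n(F) + [\mathrm{M}_n(D),\mathrm{M}_n(D)]$ by appealing to \cite[Theorems~1 and~3]{Pa_ChuLee_79}, and then disposes of the traceless part of $B \in \mathrm{M}_n(F)$ via \cite[Lemma~2]{Pa_ChuLee_79}. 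You instead decompose $A = (a_{ij})$ directly: off-diagonal entries via $a_{ij}E_{ij} = [a_{ij}E_{ii}, E_{ij}]$, the $[D,D]$-part of each diagonal entry via $[x,y]E_{ii} = [xE_{ii}, yE_{ii}]$, and the residual trace-zero $F$-diagonal via $[\mu E_{i1}, E_{1i}] = \mu(E_{ii}-E_{11})$. This makes the lifting step self-contained (only Chuang--Lee's Theorem~5 remains a black box), at the cost of a longer write-up; the paper's version is shorter but relies on three external results from \cite{Pa_ChuLee_79} instead of one. One small imprecision to tidy: an element $c_i \in [D,D]$ is a \emph{sum} of commutators rather than a single commutator $[x_i,y_i]$, so you should write $c_i = \sum_k [x_{ik},y_{ik}]$ and lift term by term; the identity $[x,y]E_{ii} = [xE_{ii},yE_{ii}]$ then applies to each summand, and the conclusion is unaffected.
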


\begin{proof}
	Part (i) follows directly from \cite[Lemma 5.8]{Pa_DuHaSo_24}. It remains to consider case (ii). Since \( p \) does not divide \( \sqrt{\dim_F D} \), it follows from \cite[Theorem 5]{Pa_ChuLee_79} that  
	$D = F + [D, D].$  
	By combining this claim with \cite[Theorems 1 and 3]{Pa_ChuLee_79}, we obtain  
	$\mathrm{M}_n(D) = \mathrm{M}_n(F) + [\mathrm{M}_n(D), \mathrm{M}_n(D)].$  	Thus, any matrix \( A \in \mathrm{M}_n(D) \) can be expressed as the sum of some \( B \in \mathrm{M}_n(F) \) and some \( C \in [\mathrm{M}_n(D), \mathrm{M}_n(D)] \). Since \( p \) does not divide \( n \), it follows that the matrix $A$ can be written as:  
	$A = \left( B - \frac{1}{n} \mathrm{trace}(B) \mathrm{I}_n \right) + \frac{1}{n} \mathrm{trace}(B) \mathrm{I}_n + C.$  
	Moreover, since the matrix $B - \frac{1}{n} \mathrm{trace}(B)$ is traceless, it follows from \cite[Lemma~2]{Pa_ChuLee_79} that $ B - \frac{1}{n} \mathrm{trace}(B) \mathrm{I}_n \in [\mathrm{M}_n(D), \mathrm{M}_n(D)],$ which implies that  
	$A \in Z(\mathrm{M}_n(D)) + [\mathrm{M}_n(D), \mathrm{M}_n(D)].$
	Consequently, we obtain the following:  
	$\mathrm{M}_n(D) = Z(\mathrm{M}_n(D)) + [\mathrm{M}_n(D), \mathrm{M}_n(D)].$
	This completes the proof.
\end{proof}

In situations where the assumptions of Theorem~\ref{key1} are not met, the corresponding equality does not necessarily hold. For example, consider the base case \( n = 1 \). As noted in the comment following \cite[Lemma 3]{Pa_As_66}, if \( \operatorname{char}(F) = p > 0 \) and \( \sqrt{\dim_F D} \) is divisible by \( p \), then it follows that \( F \subseteq [D, D] \). Meanwhile, \cite{Pa_Me_06} shows that if \( \dim_F D < \infty \), then \( D \ne [D, D] \). Therefore, under the same assumptions (positive characteristic and \( \sqrt{\dim_F D} \) divisible by \( p \)), we conclude that \( D \ne F + [D, D] \). This observation highlights the importance of the base case \( n = 1 \) in the proof of Theorem~\ref{key1}; if this case fails, the argument for \( n > 1 \) cannot proceed as intended. 

For division rings that are not algebraic over their centers, potential counterexamples may be found in \cite{Pa_Ha_19} (in one variable) or in \cite[Example 4]{Pa_Ha_02} (in several variables), where constructions inspired by Hilbert suggest that the center \( F \) of $D$ is very small and may fail to capture much of \( D \). However, we do not claim to have verified such a counterexample.

Keep Theorem~\ref{key1} in mind, we now turn to some of its applications. We begin with generalized quaternion algebras, as they form a class of algebraic structures closely related to those considered in Theorem~\ref{key1}.

A \textit{generalized quaternion algebra} over a field \( F \) is a ring containing \( F \) that is also a $4$-dimensional vector space over \( F \), with basis \( \{1, i, j, k\} \). The multiplication is defined according to the following rules:
\begin{enumerate}[\rm (i)]
	\item If \( \operatorname{char}(F) \neq 2 \), then \( i^2, j^2 \in F \setminus \{0\} \) and \( k = ij = -ji \).
	\item If \( \operatorname{char}(F) = 2 \), then \( i^2 + i \in F \), \( j^2 \in F \setminus \{0\} \), and \( k = ij = j(i + 1) \).
\end{enumerate}
According to \cite[Main Theorem~5.4.4 and Theorem~6.4.11]{Bo_Vo_21}, every quaternion algebra over \( F \) is either a division ring or isomorphic to \( \mathrm{M}_2(F) \) as an \( F \)-algebra. With this clarification in mind, we will now use it to establish the following corollary.

\begin{corollary}\label{quaternion}
	If $A$ is a generalized quaternion algebra over a field $F$ of characteristic $0$, then $\mathrm{M}_n(A) = Z(\mathrm{M}_n(A)) + [\mathrm{M}_n(A), \mathrm{M}_n(A)]$ for all positive integers $n$.
\end{corollary}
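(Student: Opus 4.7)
The plan is to reduce the corollary to Theorem~\ref{key1}(i) by exploiting the structural dichotomy for quaternion algebras cited just before the statement: the algebra $A$ is either a division ring or isomorphic to $\mathrm{M}_2(F)$ as an $F$-algebra. Since the center and the additive commutator subgroup are preserved under $F$-algebra isomorphism, it suffices to establish the decomposition in each of these two cases.

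In the first case, $A$ is a division ring of dimension $4$ over its center $F$. Every element of $A$ then satisfies its characteristic (or minimal) polynomial over $F$, so $A$ is algebraic over $F$. Because $\operatorname{char}(F) = 0$, Theorem~\ref{key1}(i) applies to $D = A$ directly and yields $\mathrm{M}_n(A) = Z(\mathrm{M}_n(A)) + [\mathrm{M}_n(A), \mathrm{M}_n(A)]$. In the second case, one uses the standard identification $\mathrm{M}_n(A) \cong \mathrm{M}_n(\mathrm{M}_2(F)) \cong \mathrm{M}_{2n}(F)$; the base field $F$ is trivially an algebraic division ring of characteristic zero, so Theorem~\ref{key1}(i), applied with $D = F$ and matrix size $2n$, gives the decomposition $\mathrm{M}_{2n}(F) = Z(\mathrm{M}_{2n}(F)) + [\mathrm{M}_{2n}(F), \mathrm{M}_{2n}(F)]$, which transfers back through the isomorphism.

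There is no substantive obstacle; the argument is essentially a bookkeeping exercise once the structure theorem for quaternion algebras is invoked. The only point requiring a brief comment is verifying that in the division-ring case the algebraic hypothesis of Theorem~\ref{key1}(i) is automatic, which follows from finite-dimensionality over the center.
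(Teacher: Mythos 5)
Your proof is correct and follows essentially the same route as the paper's: invoke the dichotomy that $A$ is either a division ring or isomorphic to $\mathrm{M}_2(F)$, then apply Theorem~\ref{key1}(i) in each case (via the identification $\mathrm{M}_n(\mathrm{M}_2(F)) \cong \mathrm{M}_{2n}(F)$ in the split case). Your added remark that a $4$-dimensional division algebra over its center is automatically algebraic, so the hypothesis of Theorem~\ref{key1}(i) is satisfied, is a helpful clarification that the paper leaves implicit.
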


\begin{proof}
According to \cite[Main Theorem~5.4.4 and Theorem~6.4.11]{Bo_Vo_21}, we know that \( A \) is either a division ring or isomorphic to \( \mathrm{M}_2(F) \) as an \( F \)-algebra. The case where \( A \) is a division ring is already addressed by Theorem~\ref{key1}. The remaining case is when \( A \) is isomorphic to \( \mathrm{M}_2(F) \) as an \( F \)-algebra. In this scenario, for any positive integer $n$, it follows that \( \mathrm{M}_n(A) \) is isomorphic to \( \mathrm{M}_{2n}(F) \) as an \( F \)-algebra, and thus, the result follows from Theorem~\ref{key1}.
\end{proof}

We now proceed with the discussion of semisimple algebras. A \textit{semisimple algebra} over a field \( F \) is an associative Artinian \( F \)-algebra with zero Jacobson radical. According to the \textit{Wedderburn--Artin theorem}, when the algebra is finite-dimensional over \( F \), this is equivalent to it being isomorphic to a finite direct product of matrix algebras over division rings that are themselves finite-dimensional over \( F \) (see \cite[Main Theorem~7.3.10]{Bo_Vo_21}). With these foundational facts in place, we can now derive the following corollaries with ease.

\begin{corollary}\label{semisimple}
If \( A \) is a semisimple finite-dimensional algebra over a field \( F \) of characteristic zero, then $A=Z(A)+[A,A]$.
\end{corollary}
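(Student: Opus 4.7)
The plan is to reduce the statement to Theorem~\ref{key1}(i) via the Wedderburn--Artin decomposition. Since $A$ is semisimple and finite-dimensional over $F$, we have an $F$-algebra isomorphism
\[
A \;\cong\; \prod_{i=1}^{k} \mathrm{M}_{n_i}(D_i),
\]
where each $D_i$ is a division ring finite-dimensional over $F$. First I would observe that the center and the additive commutator subgroup both distribute over finite direct products: $Z\bigl(\prod_i B_i\bigr) = \prod_i Z(B_i)$ and $\bigl[\prod_i B_i,\prod_i B_i\bigr] = \prod_i [B_i,B_i]$. Consequently, to prove $A = Z(A) + [A,A]$ it suffices to establish the equality $\mathrm{M}_{n_i}(D_i) = Z(\mathrm{M}_{n_i}(D_i)) + [\mathrm{M}_{n_i}(D_i), \mathrm{M}_{n_i}(D_i)]$ for each index $i$.

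Next I would verify that each factor $\mathrm{M}_{n_i}(D_i)$ falls under Theorem~\ref{key1}(i). Since $\dim_F D_i \le \dim_F A < \infty$, the division ring $D_i$ is in particular algebraic over $F$, and hence algebraic over its center (which contains $F$). Because $\operatorname{char}(F) = 0$, each $D_i$ also has characteristic zero, so the hypothesis of Theorem~\ref{key1}(i) is met. Applying that theorem to each factor and reassembling via the direct product yields the desired decomposition for $A$.

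I do not expect any serious obstacle: the only subtlety is the passage ``algebraic over $F$'' versus ``algebraic over its own center'', which is immediate because the center of $D_i$ contains $F$, so any polynomial over $F$ annihilating an element is also a polynomial over $Z(D_i)$. Everything else is a formal use of the Wedderburn--Artin structure theorem and the compatibility of centers and commutator subgroups with finite direct products.
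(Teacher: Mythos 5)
Your proof is correct and matches the paper's approach: both reduce via Wedderburn--Artin to matrix algebras over finite-dimensional division rings and then invoke Theorem~\ref{key1}(i), using that finite-dimensionality over $F$ gives algebraicity and that direct products respect centers and commutator subgroups. Your write-up is actually slightly cleaner than the paper's, which contains a stray reference to ``Theorem~\ref{key2}'' (evidently a typo for Theorem~\ref{key1}) and some vestigial remarks about a case $n>1$ not present in the corollary's statement.
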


\begin{proof}
	We begin with the base case \( n = 1 \). Since \( A \) is a semisimple finite-dimensional $F$-algebra, the Wedderburn--Artin theorem implies that \( A \) is isomorphic to a finite direct product of matrix algebras over division rings, each of which is finite-dimensional over its center. Thus, any element \( \alpha \in A \) can be written, without loss of generality, as \( \alpha = (\alpha_1, \alpha_2, \ldots, \alpha_t) \), where each \( \alpha_i \in \mathrm{M}_{n_i}(D_i) \) for some division ring \( D_i \) with center \( F_i \) and some positive integer $n_i$, and \( \dim_{F_i} D_i < \infty \). By Theorem~\ref{key2}, each matrix algebra \( \mathrm{M}_{n_i}(D_i) \) admits the decomposition: $\mathrm{M}_{n_i}(D_i) = Z(\mathrm{M}_{n_i}(D_i)) + [\mathrm{M}_{n_i}(D_i), \mathrm{M}_{n_i}(D_i)].$	It follows that each \( \alpha_i \) can be expressed as \( \alpha_i = \beta_i + \gamma_i \), where \( \beta_i \in Z(\mathrm{M}_{n_i}(D_i)) \) and \( \gamma_i \in [\mathrm{M}_{n_i}(D_i), \mathrm{M}_{n_i}(D_i)] \). Consequently, we have:
	$\alpha = (\beta_1, \beta_2, \ldots, \beta_t) + (\gamma_1, \gamma_2, \ldots, \gamma_t),$
	where \( (\beta_1, \ldots, \beta_t) \in Z(A) \) and \( (\gamma_1, \ldots, \gamma_t) \in [A, A] \), which completes the argument for the case \( n = 1 \). The case \( n > 1 \) proceeds in a similar fashion to the proof of Theorem~\ref{key1} and is therefore omitted.
\end{proof}

A particularly notable class of examples fitting the framework of Corollary~\ref{semisimple} is that of C*-algebras. As shown in \cite[Corollary I.9.13]{Bo_Da_96}, every C*-algebra has a trivial Jacobson radical. Since any finite-dimensional algebra is Artinian, it follows that every finite-dimensional C*-algebra is semisimple. This immediately yields the following corollary in the finite-dimensional case. In this setting, such algebras are, up to unitary isomorphism as \( \mathbb{C} \)-algebras, precisely the finite direct products of matrix algebras over \( \mathbb{C} \).

\begin{corollary}\label{C-algebra}
If \( A \) is a finite-dimensional C*-algebra, then $A=Z(A)+[A,A]$.
\end{corollary}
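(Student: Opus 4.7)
The plan is to reduce Corollary~\ref{C-algebra} immediately to Corollary~\ref{semisimple} by verifying the two hypotheses of the latter: semisimplicity and characteristic zero of the base field.

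First, I would recall that any C*-algebra is, by definition, a $\mathbb{C}$-algebra, so the ground field is $\mathbb{C}$, which has characteristic zero. This takes care of the characteristic hypothesis in Corollary~\ref{semisimple}.

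Next, I would verify semisimplicity of $A$ in the sense required. By \cite[Corollary I.9.13]{Bo_Da_96} the Jacobson radical of any C*-algebra vanishes, and since $A$ is finite-dimensional over $\mathbb{C}$ it is Artinian. An Artinian ring with zero Jacobson radical is semisimple, so $A$ is a semisimple finite-dimensional $\mathbb{C}$-algebra.

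Having checked both hypotheses, the conclusion $A = Z(A) + [A,A]$ follows directly from Corollary~\ref{semisimple}; there is no extra work to do. There is no real obstacle here: the Wedderburn--Artin decomposition, which underlies Corollary~\ref{semisimple}, in this case specializes to the well-known description of finite-dimensional C*-algebras as finite direct products $\prod_i \mathrm{M}_{n_i}(\mathbb{C})$, and the argument of Corollary~\ref{semisimple} applies verbatim. The only point worth a brief remark in the written proof is that the Jacobson radical vanishes and hence (via Artinianness) semisimplicity is genuinely available, so that the invocation of Corollary~\ref{semisimple} is legitimate.
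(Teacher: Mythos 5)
Your proposal is correct and matches the paper's argument exactly: the paper also deduces semisimplicity from the vanishing Jacobson radical (citing the same reference in Davidson) together with Artinianness of a finite-dimensional algebra, notes that the ground field is $\mathbb{C}$ of characteristic zero, and then invokes Corollary~\ref{semisimple}.
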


In addition to the C*-algebras discussed in Corollary~\ref{C-algebra}, similar results can be found in \cite{Pa_Thi_24}, though we will briefly summarize them here for completeness. In particular, \cite[Proposition 5.2]{Pa_Thi_24} states that if \( A \) is a unital C*-algebra, then \( A = Z(A) + [A, A] \) in the following cases:
\begin{enumerate}[\rm (1)]
	\item If \( A \) is pure, every quasitrace on \( A \) is a trace, and \( A \) has the Dixmier property.
	\item If \( A \) has no tracial states (in particular, if \( A \) is properly infinite).
	\item If \( A \) is a von Neumann algebra.
\end{enumerate} Recall that a unital C*-algebra \( A \) is said to have the Dixmier property if, for every element \( a \in A \), the closed convex hull of the unitary orbit \( \{ u a u^* : u \in A \text{ is unitary} \} \) intersects non-trivially with the center \( Z(A) \). It is shown in \cite{Pa_Di_49} that every von Neumann algebra has the Dixmier property, while an example of a unital C*-algebra without this property is also provided. Later, in \cite[Theorem 1.1]{Pa_Ar_17}, it is established that the Dixmier property holds if and only if \( A \) is weakly central, every simple quotient of \( A \) has at most one tracial state, and every extreme tracial state factors through a simple quotient. 

For the proof of (1), if \( A \) has the Dixmier property, then every element in \( A \) is a Dixmier element, as defined in \cite{Pa_Ar_23}. By \cite[Lemma 4.1]{Pa_Ar_23}, we conclude that \( A = Z(A) + \overline{[A, A]} \). Moreover, if \( A \) is pure and every quasitrace is a trace, then \( [A, A] \) is closed by \cite[Theorem 1.1]{Pa_Ng_16}. Combining these results, we obtain \( A = Z(A) + [A, A] \). For proof of (2), \cite[Theorem~1]{Pa_Pop_02} shows that if \( A \) has no tracial states, then \( A = [A, A] \). For proof of (3), it was shown in \cite[Theorem 5.19]{Pa_Bre_08} that every von Neumann algebra \( A \) satisfies \( A = Z(A) + [A, A] \).

In addition, \cite[Theorem 2.3]{Pa_Le_23} provides some necessary conditions for an algebra \( A \) to admit the decomposition \( A = Z(A) + [A, A] \). Rather than delving into these conditions here, we shift our focus to a version of the commutativity theorem in the setting of twisted group algebras, as presented below.

For the reader’s convenience, we begin with a brief overview. Let \( F \) be a field and let \( G \) be a multiplicative group. The twisted group algebra \( F^\tau G \) is a unital associative \( F \)-algebra with basis \( \overline{G} = \{ \overline{g} \mid g \in G \} \), a formal copy of \( G \) over \( F \). Addition is defined in the usual way, while multiplication is given by \( \overline{x} \, \overline{y} = \tau(x, y)\,\overline{xy} \) for all \( x, y \in G \), where the twisting function \( \tau : G \times G \to F \setminus \{0\} \) encodes the deformation.

As shown in \cite[Theorem 4.2]{Bo_Pa_89}, if \( G \) is finite and the characteristic of \( F \) does not divide the order of \( G \), then the Jacobson radical of \( F^\tau G \) is trivial. In such cases, \( F^\tau G \) is also finite-dimensional over \( F \).

To explore infinite-dimensional settings, we consider locally finite groups instead of finite ones. Recall that a group is called \textit{locally finite} if every finitely generated subgroup is finite. Clearly, all finite groups are locally finite, but the class also includes infinite examples, such as the Prüfer groups. Choosing an infinite locally finite group \( G \), the resulting twisted group algebra \( F^\tau G \) becomes infinite-dimensional over \( F \). However, in this broader context, we can no longer rely on group order to ensure the triviality of the Jacobson radical. To apply the conclusion of \cite[Theorem 4.2]{Bo_Pa_89} in the infinite case, we therefore restrict attention to fields \( F \) of characteristic zero.

With these considerations in place, we now state the following corollary.

\begin{corollary}\label{twisted}
	Let \( F^\tau G \) be a twisted group algebra of a locally finite group \( G \) over a field \( F \) of characteristic $0$, associated with the twisting function \( \tau \). If all additive commutators of $F^\tau G$ belong to the center $Z(F^\tau G)$ of $F^\tau G$, then $F^\tau G$ must be commutative.
\end{corollary}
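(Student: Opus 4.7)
The plan is to reduce commutativity of $F^\tau G$ to commutativity of a finite-dimensional subalgebra attached to a finite subgroup, and then exploit Corollary~\ref{semisimple} together with the central-commutators hypothesis to collapse the decomposition. The key leverage is that every element of $F^\tau G$ has finite support, so any pair of elements lies in $F^\tau H$ for some finitely generated, hence finite, subgroup $H$ of $G$.

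Concretely, I would fix arbitrary $x, y \in F^\tau G$ and let $g_1, \ldots, g_m \in G$ index the basis vectors appearing in $x$ and $y$. Setting $H := \langle g_1, \ldots, g_m \rangle$, local finiteness of $G$ makes $H$ finite, and $F^\tau H$ is a finite-dimensional subalgebra of $F^\tau G$ containing $x$ and $y$. Since $\operatorname{char}(F) = 0$ does not divide $|H|$, \cite[Theorem~4.2]{Bo_Pa_89} implies that $F^\tau H$ has trivial Jacobson radical, so $F^\tau H$ is a semisimple finite-dimensional algebra over a field of characteristic zero. Corollary~\ref{semisimple} then yields $F^\tau H = Z(F^\tau H) + [F^\tau H, F^\tau H]$. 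The centrality hypothesis now forces $[F^\tau H, F^\tau H] \subseteq Z(F^\tau G) \cap F^\tau H \subseteq Z(F^\tau H)$, and the decomposition collapses to $F^\tau H = Z(F^\tau H)$, i.e., $F^\tau H$ is commutative. In particular $xy = yx$, and since $x, y$ were arbitrary, $F^\tau G$ is commutative.

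I do not anticipate a serious obstacle. The two checkpoints are that \cite[Theorem~4.2]{Bo_Pa_89} genuinely applies to every finite subgroup $H \le G$, which the characteristic-zero assumption ensures uniformly, and that centrality in $F^\tau G$ descends to centrality in the subalgebra $F^\tau H$, which is immediate from the inclusion. Everything else is a clean assembly of local finiteness with Corollary~\ref{semisimple}.
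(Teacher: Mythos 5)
Your proof is correct and follows essentially the same route as the paper's: reduce to the finite subgroup generated by the supports, apply Passman's semisimplicity result together with Corollary~\ref{semisimple} to get the decomposition for $F^\tau H$, and collapse it via the central-commutator hypothesis. (Your writing $[F^\tau H, F^\tau H] \subseteq Z(F^\tau G) \cap F^\tau H \subseteq Z(F^\tau H)$ is in fact slightly more careful than the paper's phrasing, but the content is identical.)
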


\begin{proof}
Let \( \alpha, \beta \) be arbitrary elements of \( F^\tau G \). We begin by expressing them as finite linear combinations:
$\alpha = \sum_{g \in G} \lambda_g \overline{g}, \quad \beta = \sum_{h \in G} \gamma_h \overline{h},$
where \( \lambda_g, \gamma_h \in F \), and all but finitely many coefficients vanish. Define the supports of \( \alpha \) and \( \beta \) as
$\mathrm{supp}(\alpha) = \{ g \in G \mid \lambda_g \neq 0 \},$ and $\mathrm{supp}(\beta) = \{ h \in G \mid \gamma_h \neq 0 \}.$ Clearly, both $ \mathrm{supp}(\alpha)$ and  $\mathrm{supp}(\beta) $ are finite.
Let \( H = \langle \mathrm{supp}(\alpha), \mathrm{supp}(\beta) \rangle \) be the subgroup of \( G \) generated by these supports. Since \( \alpha \) and \( \beta \) are linear combinations of basis elements from \( H \), it is clear that \( \alpha, \beta \in F^\tau H \). Because \( G \) is locally finite, the subgroup \( H \) is necessarily finite. Hence, \( F^\tau H \) is a finite-dimensional twisted group algebra over \( F \). By \cite[Theorem 4.2]{Bo_Pa_89} and Corollary~\ref{semisimple}, we know that
$F^\tau H = Z(F^\tau H) + [F^\tau H, F^\tau H].$
Under the assumption that all additive commutators in \( F^\tau G \) lie in its center \( Z(F^\tau G) \), it follows that
$[F^\tau H, F^\tau H] \subseteq Z(F^\tau G) \subseteq Z(F^\tau H).$
Therefore, \( F^\tau H = Z(F^\tau H) \), implying that \( F^\tau H \) is commutative. Since \( \alpha, \beta \in F^\tau H \), it follows that \( \alpha \beta = \beta \alpha \), completing the proof.
\end{proof}

To the best of our knowledge, the original form of Corollary~\ref{twisted} was first established in the context of division rings. In particular, \cite[Lemma, p.~148]{Pa_An_47} contains a result by Ancochea stating that if an element \( c \) in a division ring \( D \) commutes with all additive commutators in \( D \), then \( c \in Z(D) \). This was later revisited by Herstein, who provided an elementary proof in \cite[Theorem~2]{Pa_Her_53}. He also observed that the assumption that \( D \) is a division ring is not necessary; the conclusion remains valid if \( D \) contains no nonzero nilpotent elements. As a consequence, in any such ring \( D \), if all additive commutators lie in the center, then \( D \) must be commutative.

 It is worth noting that the additive subgroup \( [A, A] \) of an algebra \( A \), generated by all commutators, is equal to the linear span of those commutators. This also motivates our focus on the linear spans of images of noncommutative polynomials. Hence, we continue with expressing our interest in exploring noncommutative polynomials beyond the commutators. To set the stage, we recall a useful result from \cite[Corollary 2.6]{Pa_Bre_20}, which we present here as a lemma for convenient reference, and some preliminaries.

\begin{lemma} {\rm \cite[Corollary 2.6]{Pa_Bre_20}} \label{key}
	Let \( A \) be a simple algebra over an infinite field \( F \). Suppose \( f \in F\langle\mathcal{X}\rangle \) is a noncommutative polynomial that is neither a polynomial identity nor a central polynomial of \( A \). Then, the commutator subspace \( [A, A] \) is contained in the linear span of \( f(A) \); that is, $[A, A] \subseteq \mathrm{span}f(A).$
\end{lemma}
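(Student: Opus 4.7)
The plan is to reduce to the case where $f$ is multilinear, then to show that the linear span $V := \mathrm{span}\, f(A)$ is a Lie ideal of $A$, and finally to invoke the classical structure theorem for Lie ideals of simple rings to conclude that $[A,A] \subseteq V$ whenever $V$ is noncentral.

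First, I would exploit the hypothesis that $F$ is infinite to perform the standard polarization of $f$. For a variable $x_i$ in which $f$ has degree $d \geq 2$, substituting $x_i \mapsto \sum_{j=1}^{d} t_j y_j$ with suitable scalars $t_j \in F$ and inverting the resulting Vandermonde system (feasible precisely because $F$ is infinite) extracts the fully multilinear component in $y_1,\ldots,y_d$ as an $F$-linear combination of evaluations of $f$. Iterating over every variable yields a multilinear polynomial $\tilde f$ with $\mathrm{span}\,\tilde f(A) \subseteq \mathrm{span}\, f(A)$ that inherits from $f$ the property of being neither a polynomial identity nor a central polynomial of $A$. Hence it suffices to establish the conclusion for $\tilde f$, so from now on I would assume $f$ itself is multilinear of degree $n$.

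Second, the multilinearity of $f$ gives the Leibniz-type identity
\[
[a,\, f(a_1,\ldots,a_n)] \;=\; \sum_{i=1}^{n} f\bigl(a_1,\ldots,a_{i-1},[a,a_i],a_{i+1},\ldots,a_n\bigr)
\]
for every $a, a_1,\ldots,a_n \in A$, which shows at once that $[A, V] \subseteq V$; thus $V$ is a Lie ideal of $A$.

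Finally, I would appeal to Herstein's classical theorem on Lie ideals of a simple ring: if $V$ is a Lie ideal of the simple algebra $A$, then either $V \subseteq Z(A)$ or $[A,A] \subseteq V$. Since $f$ is assumed not to be a central polynomial of $A$, the subspace $V$ cannot lie in $Z(A)$, so the second alternative holds and yields $[A,A] \subseteq \mathrm{span}\, f(A)$, as desired. The main obstacle I anticipate is the potential characteristic-two pathology in the Lie-ideal step, where the classical Herstein statement may require refinement; one would handle this either by restricting to $\operatorname{char} F \ne 2$ or by invoking one of the stronger versions (due to Lanski or Brešar himself) that remove the characteristic restriction in the simple setting.
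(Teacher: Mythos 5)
The paper does not actually prove this lemma: it is quoted verbatim from Brešar \cite[Corollary 2.6]{Pa_Bre_20}, so there is no in-paper argument to compare against. What follows is therefore an assessment of your proposal on its own terms.

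Your high-level plan --- show that \(V:=\mathrm{span}\,f(A)\) is a Lie ideal of \(A\) and then invoke the structure theorem for Lie ideals of simple rings --- is the right idea and is in the spirit of Brešar's techniques. However, the reduction to a multilinear polynomial is where the argument has a real gap, and it is a gap distinct from the characteristic-two issue you flag at the end. Polarization does give a multilinear \(\tilde f\) with \(\mathrm{span}\,\tilde f(A)\subseteq\mathrm{span}\,f(A)\), but the claim that \(\tilde f\) ``inherits'' the property of being neither a polynomial identity nor a central polynomial is not automatic: to recover \(f\) from \(\tilde f\) one specializes all the new variables to a single one, which produces \(d_1!\cdots d_m!\,f\) on the multihomogeneous pieces. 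When \(\operatorname{char}F=p>0\) and \(p\) divides one of these factorials, this reproduction of \(f\) collapses to zero, and one can no longer conclude that \(\tilde f\) is non-central (or even nonzero on \(A\)) from the corresponding property of \(f\). Since the paper applies the lemma in positive characteristic (in Theorem~\ref{key2}(ii)), this is not a harmless corner case.

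The cleaner route --- which also sidesteps the multilinear reduction entirely --- is to show directly that \(V\) is a Lie ideal for an arbitrary (not necessarily multilinear) \(f\), using the hypothesis that \(F\) is infinite in a different way. Fix \(b\in A\) and set \(\delta=[b,\,\cdot\,]\). For any \(a_1,\dots,a_m\in A\) and any \(t\in F\), the element
\[
f\bigl(a_1+t\,\delta(a_1),\,\dots,\,a_m+t\,\delta(a_m)\bigr)
\]
lies in \(f(A)\subseteq V\), and as a function of \(t\) it is a polynomial of bounded degree with coefficients in \(A\). A Vandermonde argument over the infinite field \(F\) then places each coefficient in \(V\); the coefficient of \(t^1\) is exactly \(\delta\bigl(f(a_1,\dots,a_m)\bigr)\) by the derivation chain rule. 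Hence \([b,V]\subseteq V\) for every \(b\), so \(V\) is a Lie ideal with no multilinearity assumption and no characteristic restriction at this stage. After that, your appeal to the classification of Lie ideals of simple algebras is the right move; you are also right that the classical Herstein statement excludes characteristic two, and one needs the refined versions (Lanski--Montgomery, or Brešar's own characteristic-free treatment in the simple case) to cover that case and thereby match the characteristic-free statement of the lemma.
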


Here, \( F\langle\mathcal{X}\rangle \) denotes the free $F$-algebra generated by the set \( \mathcal{X} = \{x_1, x_2, \ldots\} \), that is, the unital associative \( F \)-algebra of noncommutative polynomials in the variables \( x_i \). For any unital associative \( F \)-algebra \( A \), and any  polynomial \( f = f(x_1, \ldots, x_m) \in F\langle\mathcal{X}\rangle \) (for some positive integer \( m \)), define the \textit{image} of \( f \) on \( A \) by
$f(A) = \{f(a_1, \ldots, a_m) \mid a_1, \ldots, a_m \in A\}.$
We say that \( f\in F\langle\mathcal{X}\rangle\setminus\{0\} \) is a \textit{polynomial identity} of \( A \) if \( f(A) = \{0\} \). On the other hand, if \( f(A) \) is contained in the center of \( A \), yet \( f \) is not identically zero on \( A \), then \( f \) is called a \textit{central polynomial} of \( A \).

With Lemmas~\ref{key} and~\ref{key1} in place, we are now fully equipped to state the following result. As its proof follows directly from the preceding lemmas, we omit the details.

\begin{theorem}\label{key2}
Let \( D \) be a division ring with infinite center \( F \), and let \( n \geq 1 \) be an integer. Assume that one of the following conditions holds:
\begin{enumerate}[\rm (i)]
	\item \( D \) is an algebraic division ring of characteristic zero;
	\item \( D \) is finite-dimensional over \( F \), where \( \operatorname{char}(F) = p > 0 \), and \( p \) does not divide \( \sqrt{\dim_FD} \) nor $n$.
\end{enumerate}
If \( f \in F\langle \mathcal{X} \rangle \) is a noncommutative polynomial that is neither a polynomial identity nor a central polynomial of \( \mathrm{M}_n(D) \), then \( \mathrm{M}_n(D) \) admits the decomposition:
\[
\mathrm{M}_n(D) = Z(\mathrm{M}_n(D)) + \mathrm{span}f(\mathrm{M}_n(D)).
\]
\end{theorem}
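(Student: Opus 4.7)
The plan is to combine Theorem~\ref{key1} with Lemma~\ref{key} in essentially the most direct way possible, since the former gives the additive decomposition with $[\mathrm{M}_n(D),\mathrm{M}_n(D)]$ while the latter lets us replace the commutator space by the linear span of $f(\mathrm{M}_n(D))$.

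First I would check that the hypotheses of Lemma~\ref{key} are satisfied by the algebra $A := \mathrm{M}_n(D)$ viewed as an algebra over the field $F$. Three items need to be verified: (a) $A$ is a simple $F$-algebra — this is the classical fact that the matrix ring over a division ring is simple, together with the observation that $Z(A) = F\cdot \mathrm{I}_n \cong F$, so the ambient field of scalars in the sense of Lemma~\ref{key} is exactly $F$; (b) $F$ is infinite, which is part of the hypothesis of the theorem (and is needed because Lemma~\ref{key} is stated only over infinite fields); (c) the polynomial $f$ is neither a polynomial identity nor a central polynomial of $A$, which is exactly our assumption on $f$. With (a)--(c) in hand, Lemma~\ref{key} yields the inclusion
\[
[\mathrm{M}_n(D), \mathrm{M}_n(D)] \subseteq \mathrm{span}\, f(\mathrm{M}_n(D)).
\]

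Next I would invoke Theorem~\ref{key1}, whose hypotheses coincide with conditions (i) and (ii) here, to conclude
\[
\mathrm{M}_n(D) = Z(\mathrm{M}_n(D)) + [\mathrm{M}_n(D), \mathrm{M}_n(D)].
\]
Substituting the inclusion obtained from Lemma~\ref{key} into the right-hand side gives
\[
\mathrm{M}_n(D) \subseteq Z(\mathrm{M}_n(D)) + \mathrm{span}\, f(\mathrm{M}_n(D)) \subseteq \mathrm{M}_n(D),
\]
the last inclusion being trivial. This yields the desired equality.

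Honestly, there is no serious obstacle: the theorem is really a packaging of the two earlier results. The only point requiring a moment of care is the compatibility of the phrase \emph{neither a polynomial identity nor a central polynomial} between the statement of the theorem and that of Lemma~\ref{key}, together with checking that $\mathrm{M}_n(D)$ can legitimately be regarded as a simple algebra over the infinite field $F$ (rather than, say, over $Z(D)$ viewed only as a subring of $D$). Once these matters are reconciled, the proof is essentially a one-line substitution, which is why the authors are comfortable omitting the details.
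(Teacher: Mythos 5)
Your proposal is correct and matches exactly what the paper intends: the authors explicitly state that Theorem~\ref{key2} "follows directly from the preceding lemmas" (Lemma~\ref{key} and Theorem~\ref{key1}) and omit the proof, and your argument is precisely that one-line substitution, with the right hypothesis checks (simplicity of $\mathrm{M}_n(D)$ as an $F$-algebra, $Z(\mathrm{M}_n(D))\cong F$ infinite, and $f$ neither a polynomial identity nor a central polynomial) carried out carefully.
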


To close this section,  we present the following result, showing that certain division algebras can be generated by the images of a noncommutative polynomial. 

\begin{theorem}\label{generated}
	Let $D$ be a division $F$-algebra with finite dimension $n^2$ where $n\geq1$ is an integer, and let \( f \in F\langle \mathcal{X} \rangle \). Suppose there exists a subset $A\subseteq\mathrm{M}_n(\overline{F})$, where $\overline{F}$ is the algebraic closure of $F$, such that the image $f(A)$ contains an algebraic element of degree \( n \) over \( F \). Then, $D$ is generated by $f(D)$ as an $F$-algebra.
\end{theorem}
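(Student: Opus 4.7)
The plan is to first upgrade the hypothesis (stated in $\mathrm{M}_n(\overline{F})$) to the statement that $f(D)$ itself contains an element generating a maximal subfield of $D$, via a Zariski-density argument; and then to exploit conjugation-invariance of $f(D)$ to force the generated subalgebra $B$ to exhaust $D$. Writing $B$ for the $F$-subalgebra of $D$ generated by $f(D)$, the goal is $B=D$. Since $n=1$ is trivial and $F$ finite forces $D$ commutative by Wedderburn, I assume $n \geq 2$ and $F$ infinite. The hypothesis's reference to $\mathrm{M}_n(\overline{F})$ also tacitly requires $D$ to be central over $F$, so that $D \otimes_F \overline{F} \cong \mathrm{M}_n(\overline{F})$ realizes $D$ as a Zariski-dense $F$-form of the split matrix algebra.

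For the promotion step, I would consider the Zariski-open locus $U \subseteq \mathrm{M}_n(\overline{F})^m$ of tuples $(c_1,\ldots,c_m)$ for which the characteristic polynomial of $f(c_1,\ldots,c_m)$ has nonzero discriminant. In characteristic zero, any element of $\mathrm{M}_n(\overline{F})$ algebraic of degree $n$ over $F$ has separable minimal polynomial and hence $n$ distinct eigenvalues, so the hypothesis guarantees $U \neq \emptyset$. By Zariski density of $D^m$ in $\mathrm{M}_n(\overline{F})^m$, we obtain $U \cap D^m \neq \emptyset$. Picking $(a_1,\ldots,a_m)$ in this intersection and setting $\alpha = f(a_1,\ldots,a_m) \in B$, the fact that $\alpha$ lies in $D$ forces its minimal polynomial $m_\alpha \in F[t]$ to be irreducible and its reduced characteristic polynomial to equal $m_\alpha^{n/\deg m_\alpha}$; the $n$ distinct eigenvalues then compel $\deg m_\alpha = n$, so $F[\alpha] \subseteq B$ is a maximal subfield of $D$.

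For the conjugation step, the identity $u\, f(c_1,\ldots,c_m)\, u^{-1} = f(u c_1 u^{-1},\ldots, u c_m u^{-1})$ shows that $f(D)$, and therefore $B$, is stable under conjugation by $D^\times$. Passing to the scalar extension $B \otimes_F \overline{F} \subseteq \mathrm{M}_n(\overline{F})$, a unital, Zariski-closed $\overline{F}$-subalgebra, the conjugation action by $D^\times$ extends by continuity to its Zariski closure, namely $\mathrm{GL}_n(\overline{F})$. In characteristic zero, $\mathrm{M}_n(\overline{F}) = \overline{F} I_n \oplus \mathfrak{sl}_n(\overline{F})$ is the $\mathrm{GL}_n(\overline{F})$-module decomposition into two irreducible summands, so the only unital $\mathrm{GL}_n(\overline{F})$-invariant subalgebras are $\overline{F} I_n$ and $\mathrm{M}_n(\overline{F})$. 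Since $B \otimes_F \overline{F}$ contains $F[\alpha] \otimes_F \overline{F}$ of $\overline{F}$-dimension $n \geq 2$, it must equal $\mathrm{M}_n(\overline{F})$, giving $B = D$.

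I expect the main obstacle to be the Zariski-density promotion in the first step: the hypothesis is purely existential, and one must massage the condition ``algebraic of degree $n$'' into a Zariski-open condition (distinct eigenvalues) on which the density of $D^m$ in $\mathrm{M}_n(\overline{F})^m$ can bite. Here the infinitude of $F$, the centrality of $D$, and the characteristic-zero separability of irreducible polynomials are all essential. In positive characteristic one would need to add an explicit separability hypothesis on the witness and to ensure $p \nmid n$, which is what keeps the final representation-theoretic input (semisimplicity of the adjoint action of $\mathrm{GL}_n$) intact.
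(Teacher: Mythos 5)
Your second step is a genuinely different route from the paper's and is worth keeping in mind: where the paper invokes the Herstein--Ramer theorem (Lemma~\ref{ma}) to conclude that a maximal subfield generated by a single element $\alpha\in f(D)$ forces $D=\langle \alpha,\gamma\alpha\gamma^{-1}\rangle$ and then notes that the conjugate is again in $f(D)$, you observe that $f(D)$ is conjugation-stable, pass to $B\otimes_F\overline{F}\subseteq\mathrm M_n(\overline{F})$, and use that $\overline{F}I_n\oplus\mathfrak{sl}_n(\overline{F})$ is the decomposition of the adjoint $\mathrm{GL}_n$-module into irreducibles to conclude that the only conjugation-invariant unital subalgebras are $\overline{F}I_n$ and $\mathrm M_n(\overline{F})$. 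That argument is sound (granting the first step) and does not use that the generator of the maximal subfield lies in $f(D)$ at all, only that $f(D)$ is conjugation-invariant and generates more than $F$ -- a pleasant simplification. It does, however, require $\operatorname{char}F=0$ or at least $p\nmid n$, whereas the Herstein--Ramer route and the theorem as stated carry no characteristic hypothesis.

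The first step contains a genuine error. You assert that in characteristic zero an element of $\mathrm M_n(\overline{F})$ algebraic of degree $n$ over $F$ has separable minimal polynomial and hence $n$ distinct eigenvalues, invoking separability of irreducible polynomials. But the minimal polynomial of a matrix need not be irreducible -- the paper itself flags this just before Lemma~\ref{algebraic}. The nilpotent Jordan block $N\in\mathrm M_n(\overline{F})$ with $N^{n-1}\neq 0$ has minimal polynomial $t^n$ over $F$, hence is algebraic of degree exactly $n$, yet it has the single repeated eigenvalue $0$. If the hypothesized witness in $f(A)$ is of this (or any other non-semisimple, or semisimple-but-not-regular) type, it does not lie in your Zariski-open set $U$ of distinct-eigenvalue tuples, so $U\neq\emptyset$ does not follow from the hypothesis and the density argument never starts. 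Even weakening ``distinct eigenvalues'' to ``nonderogatory over $\overline F$'' (still Zariski-open) does not obviously repair this, because for an arbitrary $x\in\mathrm M_n(\overline{F})$ the condition $\deg_F x=n$ does not force $\deg_{\overline{F}}x=n$ (consider $\zeta I_n$ with $[F(\zeta):F]=n$). The paper's Theorem~\ref{maxima} handles this promotion by working with the polynomial identity $g_\ell(f(x),y)$, transferring it between $D$ and $\mathrm M_n(\overline{F})$ via Amitsur's theorem (Lemma~\ref{identity}) and reading off degree bounds via Lemma~\ref{algebraic} -- that is the missing ingredient you would need to replace the separability claim.
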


The proof of Theorem~\ref{generated} proceeds in two main steps. In the first step, we demonstrate that \( f(D) \) generates a maximal subfield of \( D \). The second step applies a result due to I.~N.~Herstein and A.~Ramer, namely \cite[Corollary 2]{Pa_He_72}, to complete the argument.

We begin the first step by recalling that a \emph{maximal subfield} of a division ring \( D \) is a subfield that is not properly contained in any larger subfield of \( D \). Such subfields play a fundamental role in understanding the structure of division rings, particularly in the finite-dimensional case, where their dimensions often align with that of the ambient ring. By Zorn’s Lemma, every division ring contains at least one maximal subfield.

To analyze maximal subfields in our setting, we follow an approach inspired by \cite{Pa_Aa_18}, which relies on rational identities in central simple algebras. For our purposes, however, we focus on the more specific case of division algebras rather than considering the full generality of the central simple case. A central example that will play a key role in our analysis is the following. Let \( n \) be a positive integer and let \( y_0, y_1, \dots, y_n \) be noncommuting indeterminates. Define the polynomial  
\[
g_n(y_0, y_1, \dots, y_n) = \sum_{\delta \in S_{n+1}} \operatorname{sign}(\delta) \, y_0^{\delta(0)} y_1 y_0^{\delta(1)} y_2 y_0^{\delta(2)}  \dots y_n y_0^{\delta(n)},
\]
where \( S_{n+1} \) denotes the symmetric group on \( \{0, 1, \dots, n\} \), and \( \operatorname{sign}(\delta) \) is the sign of the permutation \( \delta \). Introduced in \cite{Bo_Be_96}, this polynomial captures conditions for algebraicity using identities in noncommutative variables.

We now present the following lemma, which follows directly from \cite[Corollary 2.3.8]{Bo_Be_96}.

\begin{lemma} \label{algebraic}
	Let \( D \) be a finite-dimensional division algebra over a field \( F \), and let \( n \geq 1 \) be an integer. For any \( a \in \mathrm{M}_n(D) \), the following are equivalent:
	\begin{enumerate}[\rm (i)]
		\item The element \( a \) is algebraic over \( F \) of degree at most \( n \).
		\item The identity
		$g_n(a, r_1, \dots, r_n) = 0$
		holds for all \( r_1, \dots, r_n \in \mathrm{M}_n(D) \).
	\end{enumerate}
\end{lemma}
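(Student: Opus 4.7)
The plan is to reduce the statement directly to \cite[Corollary~2.3.8]{Bo_Be_96}, after checking two things: that $\mathrm{M}_n(D)$ falls within the scope of that corollary, and that the polynomial $g_n$ defined here is the Capelli-type polynomial used there. For the first, I would observe that since $D$ is a finite-dimensional division algebra over $F$, the matrix algebra $\mathrm{M}_n(D)$ is a central simple, hence prime, PI-algebra of PI degree at least $n$; this is precisely the kind of ring to which the Beidar--Martindale--Mikhalev machinery applies.

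The structural observation that makes $g_n$ useful is that, writing $b_i := y_0^i$ for $0 \le i \le n$, one can re-express
\[
g_n(y_0, y_1, \dots, y_n) = \sum_{\delta \in S_{n+1}} \operatorname{sign}(\delta)\, b_{\delta(0)}\, y_1\, b_{\delta(1)}\, y_2 \cdots y_n\, b_{\delta(n)},
\]
which is multilinear and alternating in the $n+1$ formal quantities $b_0, b_1, \dots, b_n$. The forward implication (i) $\Rightarrow$ (ii) then follows immediately: if $a \in \mathrm{M}_n(D)$ is algebraic over $F$ of degree at most $n$, the family $\{1, a, a^2, \dots, a^n\}$ is $F$-linearly dependent, and any multilinear alternating expression in a linearly dependent family of arguments vanishes, so $g_n(a, r_1, \dots, r_n) = 0$ for every choice of $r_1, \dots, r_n \in \mathrm{M}_n(D)$.

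The reverse implication (ii) $\Rightarrow$ (i) is the substantive part, and it is exactly what \cite[Corollary~2.3.8]{Bo_Be_96} supplies: in a prime PI-ring of sufficient PI degree, the vanishing of this Capelli-type identity on all tuples $(r_1, \dots, r_n)$ forces the powers $1, a, a^2, \dots, a^n$ to be genuinely $F$-linearly dependent, so $a$ must be algebraic of degree at most $n$ over $F$. The main obstacle in writing a careful proof is purely one of bookkeeping: matching the indexing convention in \cite{Bo_Be_96} and confirming that the ambient ring hypotheses stated there specialize correctly to $\mathrm{M}_n(D)$; no new mathematics is needed beyond what the cited corollary already provides.
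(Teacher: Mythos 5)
Your proposal takes essentially the same route as the paper: the paper states the lemma as a direct consequence of \cite[Corollary~2.3.8]{Bo_Be_96} without further elaboration, and you also reduce to that corollary (after verifying $\mathrm{M}_n(D)$ is prime PI with the appropriate centroid, and giving an explicit multilinearity/alternating argument for the easy direction (i)~$\Rightarrow$~(ii)). The added detail is a useful unpacking but does not constitute a different method.
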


Let us recall that an element \( a \) in a ring \( R \) with center \( Z = Z(R) \) is said to be \emph{algebraic of degree \( n \)} over \( Z \) if there exists a polynomial \( f(x) \in Z[x] \) of degree \( n \) such that \( f(a) = 0 \), and no polynomial of smaller degree annihilates \( a \). Note that \( f(x) \) is not required to be irreducible, even if \( Z \) is a field.

Let \( \mathcal{I}(A) \) denote the set of all polynomial identities satisfied by a central simple algebra \( A \). The following result highlights a structural connection between such algebras and their identities, and is a direct consequence of \cite[Theorem 11]{Pa_Am_66}. 

\begin{lemma} \label{identity}
	Let \( D \) be a finite-dimensional division algebra over a field \( F \), and set \( n = \sqrt{\dim_FD} \). For any field extension \( L \) of \( F \), the following equality of polynomial identity sets holds:
	$\mathcal{I}(R) = \mathcal{I}(\mathrm{M}_n(F)) = \mathcal{I}(\mathrm{M}_n(L)).$
\end{lemma}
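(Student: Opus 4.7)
The plan is to split the triple equality into two pieces and reduce each to a combination of a natural embedding and Amitsur's PI-equivalence theorem \cite[Theorem 11]{Pa_Am_66}. (I read the $\mathcal{I}(R)$ in the statement as $\mathcal{I}(D)$, since $R$ is not defined.)

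First I would record the easy inclusions coming from subalgebra relations. Since $\mathrm{M}_n(F) \hookrightarrow \mathrm{M}_n(L)$, we immediately get $\mathcal{I}(\mathrm{M}_n(L)) \subseteq \mathcal{I}(\mathrm{M}_n(F))$. For $D$, pick any splitting field $K$ of $D$ (a maximal subfield works, since $\dim_F D = n^2$); then $D \otimes_F K \cong \mathrm{M}_n(K)$, and the canonical embedding $D \hookrightarrow D \otimes_F K$ yields $\mathcal{I}(\mathrm{M}_n(K)) \subseteq \mathcal{I}(D)$.

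For the reverse inclusions I would use that multilinear polynomial identities are preserved under scalar extension by an arbitrary commutative $F$-algebra, because central scalars factor through the multilinear expansion. This already covers the multilinear part: a multilinear identity of $D$ (resp.\ of $\mathrm{M}_n(F)$) is an identity of $D \otimes_F K \cong \mathrm{M}_n(K)$, hence of $\mathrm{M}_n(F)$ (resp.\ of $\mathrm{M}_n(L)$) by restriction. To promote this from multilinear identities to arbitrary ones, I would invoke \cite[Theorem 11]{Pa_Am_66}, which guarantees that all central simple $F$-algebras of a fixed degree $n$ --- in particular $\mathrm{M}_n(F)$, $\mathrm{M}_n(L)$, and $D$ --- share the same T-ideal of polynomial identities, independently of the base field. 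Stitching the inclusions together then yields $\mathcal{I}(D) = \mathcal{I}(\mathrm{M}_n(F)) = \mathcal{I}(\mathrm{M}_n(L))$.

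The main obstacle I anticipate is the non-multilinear case, and specifically what happens when $F$ is finite: over an infinite $F$ one passes from multilinear identities to arbitrary identities by extracting homogeneous components via a Vandermonde specialization, but over a finite $F$ this shortcut fails and one must instead work with generic-matrix rings or adjoin a transcendental indeterminate. Since Amitsur's theorem already packages exactly this technicality, the plan is to treat it as a black box and assemble the triple equality from the inclusions described above.
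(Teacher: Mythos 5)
The paper offers no proof of this lemma beyond the bare citation to Amitsur's Theorem 11, so your proposal is in effect the only proof on the table, and it follows the standard route: subalgebra inclusions give $\mathcal{I}(\mathrm{M}_n(L)) \subseteq \mathcal{I}(\mathrm{M}_n(F))$ directly, a maximal subfield $K$ splits $D$ so that $D \hookrightarrow D \otimes_F K \cong \mathrm{M}_n(K)$ gives $\mathcal{I}(\mathrm{M}_n(K)) \subseteq \mathcal{I}(D)$, multilinear identities pass across scalar extension because central scalars factor through the multilinear expansion, and for infinite $F$ the Vandermonde argument lifts multilinear equivalence to the full T-ideal. You also correctly read the typographical $\mathcal{I}(R)$ as $\mathcal{I}(D)$. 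This is exactly the argument Amitsur's theorem encapsulates, and it is faithful to what the paper implicitly relies on.

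One point deserves sharpening. You treat the finite-$F$ case as a technicality that Amitsur's theorem ``packages,'' perhaps via generic matrices or adjoining a transcendental. In fact the lemma as stated is simply \emph{false} when $F$ is finite and $L$ is infinite: by Wedderburn's little theorem a finite-dimensional central division algebra over a finite field is the field itself, so $D = F$, $n = 1$, and then $x^{|F|} - x$ lies in $\mathcal{I}(F)$ but not in $\mathcal{I}(L)$. No trick with transcendentals repairs this, because it is a defect of the statement rather than of any proof. The paper escapes harm only because Theorem~\ref{maxima} disposes of finite $F$ separately before ever invoking Lemma~\ref{identity}, and because only the inclusion $\mathcal{I}(D) \subseteq \mathcal{I}(\mathrm{M}_n(\overline F))$ is actually used there. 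So your proof plan is correct for the infinite-$F$ case (which is all the paper needs), but the right fix for the remaining case is to add the hypothesis $|F| = \infty$ to the lemma, not to hunt for an argument that would make the unrestricted statement go through.
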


The following lemma confirms that the action of $g_n$ on a nonzero polynomial yields a nonzero polynomial.

\begin{lemma}\label{l2.3}
Let \( D \) be a division ring with center \( F \), and let \( f(x_1, \ldots, x_m) \in F\langle \mathcal{X} \rangle \) be a nonzero polynomial in \( m \) noncommuting variables. Then, for any positive integer \( n \), the polynomial  
$g_n(f(x_1, \ldots, x_m), y_1, \ldots, y_n)$  
is nonzero in the variables \( x_1, \ldots, x_m, y_1, \ldots, y_n \).
\end{lemma}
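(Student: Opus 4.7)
The plan is to show that $g_n(f(x_1, \ldots, x_m), y_1, \ldots, y_n)$ is a nonzero element of the free algebra $F\langle x_1, \ldots, x_m, y_1, \ldots, y_n\rangle$ by a direct analysis of monomial block lengths. First, I note that the conclusion fails when $f$ is a nonzero scalar in $F$: for such $f$, each $f^k$ commutes past the $y_j$, so
\[
g_n(f, y_1, \ldots, y_n) = f^{n(n+1)/2}\, y_1 \cdots y_n \sum_{\delta \in S_{n+1}} \operatorname{sign}(\delta) = 0.
\]
Accordingly, the lemma must be applied with $f$ non-scalar, which I henceforth assume. Let $d \geq 1$ be the top total $x$-degree of $f$ and $f_d$ the corresponding homogeneous component. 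Expanding $f^k = f_d^k + (\text{terms of }x\text{-degree} < dk)$, the homogeneous part of $g_n(f, y_1, \ldots, y_n)$ of maximal $x$-total-degree $d\,n(n+1)/2$ is precisely
\[
g_n(f_d, y_1, \ldots, y_n) = \sum_{\delta \in S_{n+1}} \operatorname{sign}(\delta)\, f_d^{\delta(0)} y_1 f_d^{\delta(1)} y_2 \cdots y_n f_d^{\delta(n)},
\]
and it suffices to prove this is nonzero.

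The key observation is that every monomial appearing in the summand indexed by $\delta$ has the form $W_0\, y_1\, W_1\, y_2 \cdots y_n\, W_n$, where each $W_i$ is a word in $x_1, \ldots, x_m$ of length exactly $d\,\delta(i)$. Because $d \geq 1$ and the markers $y_1, \ldots, y_n$ occupy fixed positions in the word, the tuple of block lengths $(|W_0|, |W_1|, \ldots, |W_n|) = d(\delta(0), \ldots, \delta(n))$ is an intrinsic invariant of the monomial and recovers $\delta$ uniquely. Consequently, monomials arising from different permutations are pairwise distinct in the free algebra, and the alternating sum admits no cross-cancellation; the contribution of $\delta = \operatorname{id}$ alone, namely $y_1 f_d y_2 f_d^2 \cdots y_n f_d^n$, is already nonzero (the free algebra being a domain and $f_d \neq 0$), which gives $g_n(f_d, y_1, \ldots, y_n) \neq 0$ and hence the lemma.

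The main technical point is the block-bookkeeping in the middle step, and it hinges crucially on $d \geq 1$: for $d = 0$ all block lengths would collapse to zero, the permutations could no longer be separated by the word structure, and the alternating sum would indeed vanish---matching the scalar counterexample noted at the outset. Once this combinatorial separation is cleanly formulated, no further delicate algebra is required, and the argument depends on nothing more than the fact that $F\langle\mathcal{X}\rangle$ is a domain and that the $y_j$'s are genuinely new variables disjoint from the $x_i$'s.
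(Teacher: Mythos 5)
Your proof is correct, and it takes a genuinely different route from the paper's. The paper embeds $D$ in a division ring $D_1$ with $\dim_F D_1 = \infty$ (via \cite[(14.21)]{Bo_La_91}), then argues that if $g_n(f(\cdot),\cdot)$ vanished identically on $D_1$, Lemma~\ref{algebraic} would force every evaluation of $f$ in $D_1$ to be algebraic of degree at most $n$, contradicting a cited theorem (\cite[Theorem 1.2]{Pa_Hai.Dung.Bien_2022}) which says that would make $D_1$ finite-dimensional. Your argument, by contrast, never leaves the free algebra: you pass to the top $x$-degree homogeneous component $f_d$, use the fact that the $y_j$'s act as position markers to show that the $n+1$ block lengths $d\cdot\delta(i)$ recover $\delta$, and conclude that the $\bigl|S_{n+1}\bigr|$ summands have pairwise disjoint monomial supports, so the alternating sum cannot collapse. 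This is more elementary and self-contained (only the fact that $F\langle\mathcal{X}\rangle$ is a domain is used), makes no reference to $D$ at all, and does not rest on external results about generalized algebraic rational identities.

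Moreover, your remark at the outset identifies a real gap in the lemma as stated: when $f = c \in F^\times$ is a nonzero scalar, $c$ commutes with everything, so $g_n(c, y_1, \ldots, y_n) = c^{n(n+1)/2} y_1\cdots y_n \sum_{\delta}\operatorname{sign}(\delta) = 0$ for all $n\geq 1$. The paper's own proof silently assumes away this case (the cited theorem cannot apply when $f$ is a central polynomial of $D_1$), so the hypothesis ``nonzero polynomial'' should read ``non-scalar'' (equivalently, total degree $\geq 1$). Your proof makes precisely this hypothesis explicit, pinpoints where $d\geq 1$ is used, and works for every non-scalar $f$ regardless of whether it is central on some particular division ring, which is slightly more general than what the paper's evaluation argument directly delivers. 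In the one place Lemma~\ref{l2.3} is invoked (Theorem~\ref{maxima}), $f(A)$ must contain an element of degree $n = \sqrt{\dim_F D}$, which already rules out constant $f$ once $D\neq F$, so the downstream results are unaffected; but the lemma itself should be corrected.
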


\begin{proof}
	By a consequence of \cite[(14.21)]{Bo_La_91}, there exists a division ring \( D_1 \) with center \( F \), containing \( D \) as a subring, such that \( \dim_F D_1 = \infty \). Assume, for contradiction, that for all elements \( a_1, a_2, \dots, a_m \in D_1 \), the evaluation \( f(a_1, a_2, \dots, a_m) \) is algebraic over \( F \) of degree at most \( n \). Then, by \cite[Theorem 1.2]{Pa_Hai.Dung.Bien_2022}, this would imply that \( D_1 \) is finite-dimensional over \( F \), which contradicts our construction of \( D_1 \). Therefore, there must exist elements \( a_1, \dots, a_m \in D_1 \) such that \( f(a_1, \dots, a_m) \) is either not algebraic over \( F \) or is algebraic of degree strictly greater than \( n \). Applying Lemma~\ref{algebraic}, we conclude that
	$g_n(f(a_1, \dots, a_m), r_1, \dots, r_n) \neq 0$
	for some \( r_1, \dots, r_n \in D_1 \). Hence, the polynomial \( g_n(f(y_1, \dots, y_m), x_1, \dots, x_n) \) is nonzero, as claimed.
\end{proof}

To investigate the existence of maximal subfields in division rings, we make use of the following lemma, which follows directly from \cite[Corollary 15.6 and Proposition 15.7]{Bo_La_91}.

\begin{lemma}  \label{maximal}
	Let \( D \) be a finite-dimensional division \( F \)-algebra, and set \( n = \sqrt{\dim_F D} \). If \( K \) is a subfield of \( D \) containing \( F \), then \( \dim_F K \leq n \), with equality if and only if \( K \) is a maximal subfield of \( D \).
\end{lemma}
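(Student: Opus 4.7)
The plan is to invoke the double centralizer theorem. Since $D$ is a finite-dimensional division algebra with center $F$, it is a central simple $F$-algebra of dimension $n^2$. Given any subfield $K$ of $D$ with $F \subseteq K$, I would form the centralizer $C_D(K) = \{d \in D : dk = kd \text{ for all } k \in K\}$. The commutativity of $K$ gives the inclusion $K \subseteq C_D(K)$, while the double centralizer theorem supplies the identity
\[
\dim_F K \cdot \dim_F C_D(K) = \dim_F D = n^2.
\]
Combining these yields $(\dim_F K)^2 \leq n^2$, hence $\dim_F K \leq n$, establishing the bound.

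For the equality case, the same identity shows that $\dim_F K = n$ is equivalent to $\dim_F C_D(K) = n$, which, together with $K \subseteq C_D(K)$ and the equal finite dimensions, is equivalent to $K = C_D(K)$. It therefore suffices to verify that $K$ is a maximal subfield of $D$ if and only if $K = C_D(K)$. If $K = C_D(K)$ and $K'$ is any subfield of $D$ containing $K$, then every element of $K'$ commutes with $K$, so $K' \subseteq C_D(K) = K$; hence $K$ is maximal. Conversely, if $K$ is maximal and $a \in C_D(K)$, then the subring $K[a]$ is commutative and, being a finite-dimensional commutative integral domain inside the division ring $D$, is itself a field containing $K$; maximality then forces $K[a] = K$, so $a \in K$.

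The main obstacle, such as it is, lies in having the double centralizer theorem available in the appropriate form for central simple algebras; this is the substantive input borrowed from Lam's text. Once this is granted, the rest of the argument is a combination of elementary dimension counting and the standard observation that every commutative subdomain of a finite-dimensional division algebra is a subfield (which itself uses that such a subdomain is Artinian and hence a field whenever it is an integral domain).
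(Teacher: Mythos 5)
Your proof is correct and follows exactly the route the paper relies on: the paper simply cites Lam \cite[Corollary 15.6 and Proposition 15.7]{Bo_La_91}, and those results are themselves proved via the Double Centralizer Theorem, which is precisely the argument you lay out. You have in effect expanded the cited reference into a self-contained proof, correctly reading the (implicit) hypothesis that $F = Z(D)$.
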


With the necessary foundation in place, we now arrive at the following result, which guarantees the existence of a maximal subfield, and thus completing the initial stage of our argument.

\begin{theorem}\label{maxima}
Let $D$ be a finite-dimensional division algebra over a field $F$, and let $f(x_1, \ldots, x_m) \in F\langle \mathcal{X} \rangle$ be a nonzero noncommutative polynomial in $m$ variables. Suppose that there exists a subset $A \subseteq \mathrm{M}_n(\overline{F})$, where $\overline{F}$ is the algebraic closure of $F$, such that the image $f(A)$ contains an element algebraic of degree $n$ over $F$. Then, there exist elements $a_1, \ldots, a_m \in D$ such that the intersection of all subdivision rings of $D$ containing both $f(a_1, \ldots, a_m)$ and $F$ forms a maximal subfield of $D$.
\end{theorem}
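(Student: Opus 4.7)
The plan is to produce the tuple $(a_1,\dots,a_m)\in D^m$ by a polynomial-identity transfer argument, using the ``degree-detector''
\[
h(x_1,\dots,x_m,y_1,\dots,y_{n-1}) \;:=\; g_{n-1}\bigl(f(x_1,\dots,x_m),\,y_1,\dots,y_{n-1}\bigr).
\]
The hypothesis supplies some $c\in f(A)\subseteq \mathrm{M}_n(\overline{F})$ that is algebraic over $F$ of degree exactly $n$, and in particular \emph{not} of degree at most $n-1$. Applying (the evident version of) Lemma~\ref{algebraic} to the matrix algebra $\mathrm{M}_n(\overline{F})$ with parameter $n-1$ produces $r_1,\dots,r_{n-1}\in \mathrm{M}_n(\overline{F})$ with $g_{n-1}(c,r_1,\dots,r_{n-1})\ne 0$. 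Consequently, $h$ is not a polynomial identity of $\mathrm{M}_n(\overline{F})$.

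Next I would appeal to Lemma~\ref{identity}, which gives $\mathcal{I}(D)=\mathcal{I}(\mathrm{M}_n(\overline{F}))$ as soon as $\dim_F D=n^2$; hence $h$ fails to be a polynomial identity of $D$ as well. Choose $a_1,\dots,a_m,b_1,\dots,b_{n-1}\in D$ with $g_{n-1}(f(a_1,\dots,a_m),b_1,\dots,b_{n-1})\ne 0$ and set $c_0:=f(a_1,\dots,a_m)$. A second use of Lemma~\ref{algebraic}, this time inside $D$, shows that $c_0$ is not algebraic over $F$ of degree $\le n-1$. On the other hand, every $d\in D$ generates a commutative finite-dimensional integral domain $F[d]\subseteq D$, which is automatically a subfield, so Lemma~\ref{maximal} forces $\dim_F F[d]\le n$ and hence $d$ is algebraic over $F$ of degree at most $n$. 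Combining the two bounds pins the degree of $c_0$ at exactly~$n$.

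Finally, $F[c_0]$ is a subfield of $D$ of dimension $n$ over $F$, which is maximal by the equality clause of Lemma~\ref{maximal}; moreover, any subdivision ring of $D$ containing $F$ and $c_0$ must contain $F[c_0]$, and $F[c_0]$ is itself such a subdivision ring, so the intersection appearing in the statement coincides with $F[c_0]$. The principal obstacle is the transfer step: there is no direct way to search $D^m$ for a tuple whose $f$-value has a prescribed algebraic degree, so one must encode ``degree $\le n-1$'' as the vanishing of a concrete noncommutative polynomial and then port this information from the split form $\mathrm{M}_n(\overline{F})$ back to $D$ via Amitsur's theorem. Lemma~\ref{l2.3} plays the auxiliary role of ensuring in advance that $h$ is a genuinely nonzero element of the free algebra, so that the PI-theoretic statements made about it are not vacuous.
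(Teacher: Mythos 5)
Your proposal is correct and rests on the same four ingredients as the paper's proof — the detector polynomial $g_k$, the algebraicity criterion of Lemma~\ref{algebraic}, Amitsur's transfer of polynomial identities via Lemma~\ref{identity}, and the subfield-dimension bound of Lemma~\ref{maximal} — but you run the transfer in the opposite direction. The paper introduces the extremal quantity $\ell=\max\{\dim_F F(f(a_1,\dots,a_m))\}$, observes that $g_\ell(f(\cdot),\cdot)$ is a polynomial identity of $D$, uses Lemma~\ref{l2.3} to guarantee that this polynomial is nonzero, transfers the identity to $\mathrm{M}_n(\overline{F})$, and then forces $\ell\ge n$ by contradiction with the hypothesis. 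You instead start at $\mathrm{M}_n(\overline{F})$: the hypothesized element $c\in f(A)$ of degree $n$ makes $g_{n-1}(f(\cdot),\cdot)$ fail to be an identity there, you transfer the non-identity status to $D$, and you extract a concrete witness $c_0=f(a_1,\dots,a_m)$ whose degree is pinned at $n$ by the two-sided bound from Lemma~\ref{algebraic} and Lemma~\ref{maximal}. Your direction is marginally cleaner: it produces the tuple constructively rather than by contradiction, and the nonvanishing of $h$ on $\mathrm{M}_n(\overline{F})$ already witnesses $h\ne 0$ in the free algebra, so Lemma~\ref{l2.3}, which you keep as a safety net, is in fact superfluous in your argument.

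One small omission: you tacitly assume $F$ is infinite. Amitsur's theorem, which underlies Lemma~\ref{identity}, requires an infinite base field — over a finite field $\mathrm{M}_n(F)$ satisfies additional identities not shared with $\mathrm{M}_n(\overline{F})$, so the equality $\mathcal{I}(D)=\mathcal{I}(\mathrm{M}_n(\overline{F}))$ breaks down. This is precisely why the paper first disposes of the finite case (there $D$ is a field by Wedderburn's little theorem, so $D=Z(D)=F$, $n=1$, and the conclusion is vacuous) before touching any polynomial-identity machinery. Prepending that one-line reduction makes your argument complete.
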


\begin{proof}
If $F$ is finite, then $D$ is also finite-dimensional over a finite field, so the result is immediate. Assume from now on that $F$ is infinite, and set $n = \sqrt{\dim_F D}$. For any $a_1, \ldots, a_m \in D$, let us denote by $F(f(a_1, \ldots, a_m))$ the intersection of all subdivision rings of $D$ that contain both $F$ and $f(a_1, \ldots, a_m)$. This intersection is a field, and by Lemma~\ref{maximal}, the goal is to find such elements for which this field has $F$-dimension at least $n$. Define
$$
\ell = \max\{ \dim_F F(f(a_1, \ldots, a_m)) \mid a_1, \ldots, a_m \in D \}.
$$ By Lemma~\ref{algebraic},  $
g_\ell(f(a_1, \ldots, a_m), r_1, \ldots, r_\ell) = 0,
$ for all $a_1, \ldots, a_m, r_1, \ldots, r_\ell \in D$ Thus, the polynomial $g_\ell(f(x_1, \ldots, x_m), y_1, \ldots, y_n)$ vanishes identically on $D$, making it a polynomial identity of $D$ in the variables $x_1, \ldots, x_m, y_1, \ldots, y_n$. By Lemma~\ref{l2.3}, this polynomial is a nonzero polynomial. Now, Lemma~\ref{identity} tells us that any polynomial identity satisfied by $D$ must also hold in $\mathrm{M}_n(\overline{F})$. Hence,
$
g_\ell(f(A_1, \ldots, A_m), B_1, \ldots, B_\ell) = 0
$ for all $A_1, \ldots, A_m, B_1, \ldots, B_\ell \in \mathrm{M}_n(\overline{F})$. Applying Lemma~\ref{algebraic} again, this implies that all elements of the form $f(A_1, \ldots, A_m)$ are algebraic over $F$ of degree at most $\ell$. But by assumption, there exist $A_1, \ldots, A_m \in \mathrm{M}_n(\overline{F})$ such that $f(A_1, \ldots, A_m)$ is algebraic of degree $n$ over $F$. Therefore, $\ell \geq n$, as required. The proof is complete.
\end{proof}

We now make use of a result originally due to I.~N.~Herstein and A.~Ramer, stated here for convenience as it plays a crucial role in what follows (see \cite[Corollary~2]{Pa_He_72}).

\begin{lemma}\label{ma}
	Let \( D \) be a finite-dimensional division algebra over a field \( F \). If \( K \subseteq D \) is a maximal subfield generated by some element \( \alpha \in D \), then \( D \) is generated as an \( F \)-algebra by \( \alpha \) and a conjugate of \( \alpha \).
\end{lemma}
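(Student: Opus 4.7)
My plan is to reduce the problem to a statement about intersections of conjugate maximal subfields, and then exploit the double centralizer theorem inside the central simple algebra \( D \). Set \( n = \sqrt{\dim_F D} \); by maximality of \( K = F(\alpha) \), we have \( [K:F] = n \) and \( C_D(\alpha) = C_D(K) = K \). For any \( d \in D^* \), let \( B_d = F\langle \alpha, d\alpha d^{-1} \rangle \) be the \( F \)-subalgebra generated by \( \alpha \) and its conjugate. Then
\[
C_D(B_d) = C_D(\alpha) \cap C_D(d\alpha d^{-1}) = K \cap dKd^{-1},
\]
and \( Z(B_d) \subseteq C_D(B_d) \). So if I can produce \( d \) with \( K \cap dKd^{-1} = F \), then \( Z(B_d) = F \), making \( B_d \) central simple over \( F \), and the double centralizer theorem yields \( [B_d:F] \cdot [C_D(B_d):F] = [D:F] \), forcing \( B_d = D \).

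The crux is therefore to produce a single \( d \in D^* \) with \( K \cap dKd^{-1} = F \). By Skolem--Noether, every \( F \)-isomorphic copy of \( K \) in \( D \) has the form \( dKd^{-1} \), so it suffices to find some such copy \( K' \) with \( K \cap K' = F \). I would establish this by extending scalars to the algebraic closure: \( D \otimes_F \bar F \cong \mathrm{M}_n(\bar F) \), and (in the separable case) \( K \otimes_F \bar F \) becomes the diagonal torus \( T \) in \( \mathrm{M}_n(\bar F) \) after a suitable change of basis. For a generic \( g \in \operatorname{GL}_n(\bar F) \), the torus \( gTg^{-1} \) meets \( T \) only in the scalars, because two maximal tori sharing a non-scalar element necessarily share a nontrivial decomposition of the eigenspaces. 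The ``bad'' locus \( \{d : K \cap dKd^{-1} \supsetneq F\} \) is therefore an \( F \)-closed proper subvariety of \( D^* \); since \( D^* \) is an open subvariety of affine space and \( F \) is infinite (the finite-field case reduces to Wedderburn's theorem on finite division rings), there must exist a rational point outside it.

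The main obstacle I anticipate is this density step --- rigorously producing an \( F \)-rational \( d \in D^* \) outside the bad locus from the existence of a generic \( g \in \operatorname{GL}_n(\bar F) \). This will require identifying the bad locus as an \( F \)-subvariety cut out by polynomial conditions in the coordinates of \( d \), and then using infinitude of \( F \) together with irreducibility of \( D^* \) as an \( F \)-variety to guarantee rational points in its complement. A secondary complication is inseparability: if \( \alpha \) generates an inseparable extension (so \( \operatorname{char}(F) = p > 0 \) divides \( n \)), the tensor product \( K \otimes_F \bar F \) is not \'etale, and one must instead work with the separable closure of \( F \) in \( K \), treating the purely inseparable part via a direct argument on truncated powers of \( \alpha \).
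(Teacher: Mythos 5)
The paper does not prove this lemma at all: it explicitly attributes it to Herstein and Ramer and cites it as \cite[Corollary~2]{Pa_He_72}. So there is no internal proof to compare your attempt against; you are attempting to reprove a result the paper takes from the literature.

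That said, your structural reduction is the right one and presumably close in spirit to Herstein--Ramer's: note that $C_D(\alpha)=C_D(K)=K$, reduce to producing $d\in D^*$ with $K\cap dKd^{-1}=F$, and then conclude $B_d=D$ by the double centralizer theorem applied to the division subring $B_d$ (which is automatically simple, since any finite-dimensional $F$-subalgebra of a division ring without zero divisors is a division ring). Skolem--Noether correctly identifies the $F$-isomorphic copies of $K$ as the conjugates $dKd^{-1}$. All of this is sound, granting the implicit assumption (also implicit in the paper, and required for Lemma~\ref{maximal} and the double centralizer theorem to read correctly) that $F=Z(D)$.

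The gaps are exactly the two you flag, and both are genuine. First, the ``bad locus'' $\{d\in D^*: K\cap dKd^{-1}\supsetneq F\}$ is not obviously closed as written, since it is defined by an existential condition. You would need to rewrite it as a finite union, over the (finitely many) minimal intermediate fields $F\subsetneq L\subseteq K$, of the sets $\{d : d^{-1}Ld\subseteq K\}$; each of these is cut out by $F$-polynomial conditions on $d$ (after clearing the reduced norm from $d^{-1}$), and each must be shown proper. Since $D^*$ is an irreducible open subvariety of $\mathbb{A}^{n^2}_F$, a finite union of proper $F$-closed subsets misses an $F$-rational point when $F$ is infinite. This is fixable, but as stated your sketch elides the decomposition into intermediate fields. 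Second, and more seriously, the inseparable case is not actually handled. When $K/F$ is inseparable, $K\otimes_F\overline{F}$ is not a maximal torus (it contains nilpotents), so the ``two maximal tori sharing a non-scalar element'' argument breaks down at the very point where it is doing the work. The first-order heuristic (that $[\ell,D]\not\subseteq K$ forces properness) also fails in small cases: for a quaternion division algebra in characteristic $2$ with $\alpha^2\in F$, one has $\operatorname{ad}_\alpha^2=0$, hence $[\alpha,D]\subseteq K$, and a genuinely different argument (e.g., via reduced norm constraints on $b^{-1}\alpha b$) is needed to show $\{d:d^{-1}\alpha d\in K\}=K^*\neq D^*$. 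Your mention of ``a direct argument on truncated powers of $\alpha$'' is a placeholder rather than a proof. As it stands, the proposal is an honest outline with two acknowledged holes, not a complete proof.
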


With Theorem~\ref{maxima} and Lemma~\ref{ma} in place, we are ready to complete the proof of Theorem~\ref{generated}.

\begin{proof}[The proof of Theorem~\ref{generated}]
	Suppose  \( f=f(x_1, \ldots, x_m) \in F\langle \mathcal{X} \rangle \) is a nonzero noncommutative polynomial in \( m \) variables. By Theorem~\ref{maxima}, there exist elements \( a_1, \ldots, a_m \in D \) such that the subfield \( F(f(a_1, \ldots, a_m)) \), defined as  the intersection of all subdivision rings of \(D\) that contain both \(F\) and the element \(f(a_1,\ldots,a_m)\), is a maximal subfield of \( D \). Applying Lemma~\ref{ma}, since this maximal subfield is generated by the single element \( f(a_1, \ldots, a_m) \), it follows that \( D \) is generated as an \( F \)-algebra by this element and one of its conjugates. Moreover, such a conjugate has the form
	$\gamma f(a_1, \ldots, a_m) \gamma^{-1}$
	for some nonzero element \( \gamma \in D \). Notably, this conjugate can also be written as
	$f(\gamma a_1 \gamma^{-1}, \ldots, \gamma a_m \gamma^{-1}),$
	which is again an element in the image of \( f \) evaluated on \( D \). In other words, both \( f(a_1, \ldots, a_m) \) and its conjugate belong to \( p(D) \), and together they generate \( D \) as an \( F \)-algebra. This completes the proof.
\end{proof}

	\section*{Declarations}
	
	Our statements here are the following:
	
	\begin{itemize}
		\item {\bf Ethical Declarations and Approval:} The authors have no any competing interest to declare that are relevant to the content of this article.
		\item {\bf Competing Interests:} The authors declare no any conflict of interest.
		\item  {\bf Authors' Contributions:} All three listed authors worked and contributed to the paper equally. The final editing was done by the corresponding author Tran Nam Son and was approved by all of the present authors.
		\item {\bf Availability of Data and Materials:} Data sharing not applicable to this article as no data-sets or any other materials were generated or analyzed during the current study.
	\end{itemize}
	
	\section*{Acknowledgment}
	
This research is funded by University of Transport and Communications (UTC) under grant number T2025-PHII\_KHCB-005.

	\bibliographystyle{amsplain}

\end{document}